\theoremstyle{definition}
\newtheorem{definition}{Definition}
\theoremstyle{plain}
\newtheorem{theorem}{Theorem}
\newtheorem{lemma}{Lemma}
 \newcommand{\vertiii}[1]{{\left\vert\kern-0.25ex\left\vert\kern-0.25ex\left\vert #1 
    \right\vert\kern-0.25ex\right\vert\kern-0.25ex\right\vert}}
\title{Ergodic Estimates for Toeplitz Sequences Generated by a Symbol}
\author{Giovanni Barbarino\\\footnotesize Mathematics and Operational Research Unit, Facult\'e polytechnique,\\ \footnotesize  Universit\'e de Mons, Belgium (giovanni.barbarino@umons.ac.be)}
\date{}
\begin{document}
\maketitle

\begin{abstract}
    We analyse the convergence of the ergodic formula for sequences of Toeplitz matrices generated by a symbol. We produce explicit bounds for the convergence rate depending on the size of the matrix, the regularity of the symbol and the regularity of the test function. 
\end{abstract}

\section{Introduction}

Throughout this paper, a matrix-sequence is a sequence of the form $\{A_n\}_n$, where $A_n$ is a square matrix and ${\rm size}(A_n)=n$. Let $C_c(\mathbb R)$ (resp., $C_c(\mathbb C)$) be the space of continuous complex-valued functions with bounded support defined on $\mathbb R$ (resp., $\mathbb C$).
If $A\in\mathbb C^{n\times n}$, the singular values and eigenvalues of $A$ are denoted by $\sigma_1(A),\ldots,\sigma_n(A)$ and $\lambda_1(A),\dots,\lambda_n(A)$, respectively. 
We denote by $\mu_d$ the Lebesgue measure in $\mathbb R^d$. Throughout this paper, ``measurable'' means ``Lebesgue measurable''. Moreover, we denote by $\|g\|_p$ the $L^p$-norm of the function $g$ over its domain (which will be clear from the context). 

We say that a matrix-sequence $\{A_n\}_n$ admits a singular value symbol (or simply a symbol) $f(x)$ when the sampling of $|f(x)|$  over a   uniform   grid in   its domain yields an approximation of the singular values of   $A_n$ that improves   as $n\to \infty$. The  rigorous definition is given below; it is based   on an ergodic formula that must hold for every test   function $F\in C_{c}(\mathbb R)$.   

\begin{definition}[\textbf{asymptotic singular value distribution of a matrix-sequence}]\label{dd}
Let $\{A_n\}_n$ be a matrix-sequence with $A_n$ of size $n$, and let $f:\Omega\subset\mathbb R^d\to\mathbb C$ be measurable with $0<\mu_d(\Omega)<\infty$.
We say that $\{A_n\}_n$ has an asymptotic singular value distribution described by $f$ if the following \textit{ergodic formula} holds:
\begin{equation}\label{eq:ergodic_formula}
\lim_{n\to\infty}\frac1{n}\sum_{i=1}^{d_n}F(\sigma_i(A_n))=\frac1{\mu_d(\Omega)}\int_\Omega\sum_{i=1}^kF(\sigma_i(f(x)))\mathrm{d}x,\qquad\forall\,F\in C_c(\mathbb R).
\end{equation}
In this case, $f$ is called the singular value symbol (or simply the symbol) of $\{A_n\}_n$ and we write $\{A_n\}_n\sim_\sigma f$.
\end{definition}

Among the various matrix-sequences that possess a symbol, Toeplitz sequences generated by $L^1$ functions stand out for their importance.  
A matrix of the form
\begin{equation}\label{mbtm_expr}
\left[f_{i-j}\right]_{i,j=1}^{n}=\begin{bmatrix}
f_0 & f_{-1} & \ f_{-2} & \ \cdots & \ \ \cdots & f_{-(n-1)} \\
f_1 & \ddots & \ \ddots & \ \ddots & \ \ & \vdots\\
f_2 & \ddots & \ \ddots & \ \ddots & \ \ \ddots & \vdots\\
\vdots & \ddots & \ \ddots & \ \ddots & \ \ \ddots & f_{-2}\\
\vdots & & \ \ddots & \ \ddots & \ \ \ddots & f_{-1}\\
f_{n-1} & \cdots & \ \cdots & \ f_2 & \ \ f_1 & f_0
\end{bmatrix},
\end{equation}
whose $(i,j)$-th entry depends only on the difference $i-j$ is called a Toeplitz matrix. In other words, a Toeplitz matrix is a matrix whose entries are constant along each diagonal.
A case of special interest arises when the entries $f_k$ are the Fourier coefficients of a function $f:[-\pi,\pi]\to\mathbb C$ in $L^1([-\pi,\pi])$, i.e.,
\[ f_k=\frac1{2\pi}\int_{-\pi}^\pi f(x){\rm e}^{-{\rm i}kx}{\rm d}x,\qquad k\in\mathbb Z. \]
In this case, the matrix \eqref{mbtm_expr} is denoted by $T_n(f)$ and is referred to as the $n$-th Toeplitz matrix generated by $f$. One can also define the associated infinite-dimensional operator $T(f)$ and represent it as the infinite Toeplitz matrix $\left[f_{i-j}\right]_{i,j\in \mathbb N}$.   
It is not difficult to see that the conjugate transpose of $T_n(f)$ is given by
\begin{equation*}
T_n(f)^H=T_n(\overline f)
\end{equation*}
for every $f\in L^1([-\pi,\pi])$ and every $n$; see, e.g., \cite[Section~6.2]{GLT1}. In particular, $T_n(f)$ is Hermitian whenever $f$ is almost everywhere a real-valued function.\\

The asymptotic singular value distribution of the Toeplitz sequence $\{T_n(f)\}_n$ has been deeply investigated over time, starting from Szeg\"o’s first limit theorem \cite{GS} and the Avram–Parter theorem \cite{Avram,Parter}, up to the works by Tyrtyshnikov–Zamarashkin \cite{Ty96,ZT,ZT'} and Tilli \cite{TilliL1,Tilli-complex}. For more on this subject, see \cite[ Chapter 5]{BoSi}  and \cite[Chapter 6]{GLT1} and references therein. 
Using our notations, the Avram-Parter theorem states that $\{T_n(f)\}_n\sim_\sigma f$ for any  $f\in L^1([-\pi,\pi])$. As a consequence, the ergodic formula \eqref{eq:ergodic_formula} holds with $A_n=T_n(f)$ for any $L^1$ function $f$.

The analysis of the ergodic formula for Toeplitz sequences generated by real-valued symbols $f$ is also linked to the asymptotic study of the eigenvalues and spectral distribution of the matrices $T_n(f)$. Several works enquire about the existence of an asymptotic expansion of the eigenvalues when we impose certain conditions on $f$. 
See \cite{SL} and references therein for the case of (modified) simple loop functions, that are in particular nonnegative $C^1_{per}[-\pi,\pi]$ functions with $f(0)=f'(0)=0$ being a zero of order at most 2 and exactly another extremum $f'(\xi)=0$ in the open interval $(-\pi,\pi)$. 
For the case of (some) smooth even function $f$ that is monotonous on $[0,\pi]$ see \cite{Sven1,Sven2}.

Several have been the studies on the ergodic formula \eqref{eq:ergodic_formula} for Toeplitz matrices, for example about the  classes of test functions satisfying it, (see \cite{Serra02,Serra03}).

In this context, Widom proved a more precise result \cite{Widom}. He showed that, for essentially bounded symbols $f$ with $$\vertiii{f}^2:= \sum_{k=-\infty}^{\infty} \lvert k\rvert \lvert f_k\rvert^2<\infty,$$ we have 
\begin{equation}\label{eq:ergodic_Widom}
    \lim_{n\to \infty} \left\{ 
    \sum_{j=1}^n G(\sigma_j(T_n(f))^2) - \frac n {2\pi} \int_{-\pi}^\pi G(\lvert f(\theta)\rvert^2)d\theta
    \right\} = Tr[G(T(\overline f)T(f)) + G(T(f)T(\overline f)) - 2T(G(\lvert f\rvert^2)) ]
\end{equation}
for any $C^3$ function $G(x)$ defined at least on an interval containing  $\sigma_j(T_n(f))^2$ and $\sigma_j(T(f))^2$  for all $j,n$.  This implies that 
\[\left\lvert 
\frac 1n
\sum_{j=1}^n F(\sigma_j(T_n(f))) - \frac 1 {2\pi} \int_{-\pi}^\pi F(\lvert f(\theta)\rvert)d\theta 
\right\rvert = O\left(\frac 1n\right)
\]
thus providing an estimate for the convergence rate of the ergodic formula \eqref{eq:ergodic_formula} for any function $F(x)\in C_c(\mathbb R)$ that is an extension a function of the form $G(x^2)$ with $G(x)$ as specified above. Notice that the hypotheses on $f$ are satisfied whenever $f\in C^1_{per}[-\pi,\pi]$, since in this case $\vertiii f^2\le \|f'\|_2^2$ and $f$ is bounded.

For concrete applications, as in \cite{Bosonic,Bosonic2}, we need an explicit bound of the form
\begin{equation} \label{eq:ergodic_bound}
\left\lvert 
\frac 1n
\sum_{j=1}^n F(\sigma_j(T_n(f))) - \frac 1 {2\pi} \int_{-\pi}^\pi F(\lvert f(\theta)\rvert)d\theta 
\right\rvert \le c(n)
\end{equation}
holding for any $n\ge n_0$ with an explicit $n_0$. Equation \eqref{eq:ergodic_Widom} shows that, for some choices of $F(x)$ and $f(x)$,  we can take $c(n) = O(1/n)$, but the dependence of $c(n)$ on $f$ and $F$ is not specified. 
In the first main result of this paper (Theorem~\ref{th:1}), for the same choices of $F(x)$ and $f(x)$, 
we provide an explicit bound $c(n)$ in \eqref{eq:ergodic_bound}. 
\begin{theorem}\label{th:1}
   Let $f\in L^\infty([-\pi,\pi])$ with $\vertiii{f}^2<\infty$ and let $G\in C^3_c(\mathbb R)$. Then, for every $n\ge 1$, \[      \left\lvert \sum_{j=1}^n  G(\sigma_j(T_n(f))^2) - \frac n {2\pi} \int_{-\pi}^\pi G(\lvert f(\theta)\rvert^2)d\theta \right\rvert\le 
      2\vertiii f^2 \left[ c_1 + 2c_2\|f\|_\infty^2 \right]
    \]
where $c_1= 2\|G'\|_1+\sqrt 2\|G''\|_2$ and $c_2 =  2\|G''\|_1+\sqrt 2\|G'''\|_2$.
\end{theorem}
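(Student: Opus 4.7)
The plan is to rewrite both sides of the inequality as matrix traces and then reduce to trace-norm estimates, using Fourier representation of $G$ combined with classical Toeplitz--Hankel identities. Since $T_n(f)^H T_n(f) = T_n(\overline f)T_n(f)$ has eigenvalues $\sigma_j(T_n(f))^2$, one has $\sum_j G(\sigma_j(T_n(f))^2) = \mathrm{Tr}[G(T_n(\overline f)T_n(f))]$; and since every diagonal entry of $T_n(G(|f|^2))$ equals the zeroth Fourier coefficient of $G(|f|^2)$, one has $\frac{n}{2\pi}\int G(|f|^2)\,d\theta = \mathrm{Tr}[T_n(G(|f|^2))]$. Introducing the intermediate $\mathrm{Tr}[G(T_n(|f|^2))]$, the target splits into $D_1+D_2$ with $D_1 = \mathrm{Tr}[G(T_n(\overline f)T_n(f))-G(T_n(|f|^2))]$ and $D_2 = \mathrm{Tr}[G(T_n(|f|^2))-T_n(G(|f|^2))]$.

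For $D_1$, I would use the Fourier inversion $G(x)=\frac{1}{2\pi}\int \widehat G(\xi)e^{i\xi x}\,d\xi$ and Duhamel's formula, which for Hermitian $A,B$ gives $\|e^{i\xi A}-e^{i\xi B}\|_1\le|\xi|\,\|A-B\|_1$ (the exponentials are unitary). With $A=T_n(\overline f)T_n(f)$ and $B=T_n(|f|^2)$, this yields $|D_1|\le\frac{\|A-B\|_1}{2\pi}\int|\widehat{G'}(\xi)|\,d\xi$. The classical Toeplitz--Hankel identity decomposes $A-B$ as a sum of two products of ``corner'' pieces whose Hilbert--Schmidt norms are each controlled by $\vertiii{f}$ (indeed the Hankel operator associated with $g$ has squared Hilbert--Schmidt norm $\sum_{k\ge 1}k|g_k|^2\le\vertiii{g}^2$), so $\|A-B\|_1\le 2\vertiii{f}^2$. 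The Fourier integral $\int|\widehat{G'}(\xi)|\,d\xi$ is bounded by splitting at a cutoff frequency: the $L^\infty$ estimate $\|\widehat{G'}\|_\infty\le\|G'\|_1$ handles low frequencies and Cauchy--Schwarz against $1/\xi$ (combined with Plancherel for $\|\widehat{G''}\|_2$) handles high frequencies, producing the constant $c_1$ and hence $|D_1|\le 2\vertiii{f}^2\,c_1$.

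The bound on $D_2$ follows the same Fourier strategy, now applied to $\Phi(\xi)=\mathrm{Tr}[e^{i\xi T_n(h)}-T_n(e^{i\xi h})]$ with $h=|f|^2$. The crucial observations are $\Phi(0)=n-n=0$ and $\Phi'(0)=i\,\mathrm{Tr}[T_n(h)-T_n(h)]=0$, so Taylor's theorem gives $|\Phi(\xi)|\le\tfrac{\xi^2}{2}\sup_s|\Phi''(s)|$. The uniform bound on $|\Phi''(s)|$ is obtained by unwinding $T_n(h)^2 e^{is T_n(h)}-T_n(h^2 e^{ish})$ through a second application of the Toeplitz--Hankel identity, reducing matters to $\|T_n(h)^2-T_n(h^2)\|_1$ and related trace norms; the additional factor $\|f\|_\infty^2$ arises from operator-norm estimates on the Toeplitz factors that appear when decomposing $h=\overline f\cdot f$ via identities such as $H(\overline f f)=T(\overline f)H(f)+H(\overline f)T(\wt f)$. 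The remaining Fourier integral $\int\xi^2|\widehat G(\xi)|\,d\xi=\int|\widehat{G''}(\xi)|\,d\xi$ is handled in analogy with the $D_1$ step but shifted by one derivative, producing $c_2$ and $|D_2|\le 4\vertiii{f}^2\|f\|_\infty^2\,c_2$. Summing then yields the stated bound $2\vertiii{f}^2[c_1+2c_2\|f\|_\infty^2]$.

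The main obstacle will be the uniform bound on $|\Phi''(s)|$ in the analysis of $D_2$: unlike $D_1$, where one Duhamel step reduces everything to a single trace-norm estimate, here one must track nested Hankel corrections inside the exponentials $e^{isT_n(h)}$, each estimated in Hilbert--Schmidt norm using $\vertiii f$ and $\|f\|_\infty$. This extra layer is precisely what produces the $\|f\|_\infty^2$ factor and shifts the derivative order in $G$ from $(G',G'')$ in $c_1$ to $(G'',G''')$ in $c_2$.
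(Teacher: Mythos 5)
Your decomposition into $D_1+D_2$ with the intermediate $\mathrm{Tr}[G(T_n(|f|^2))]$ is exactly the paper's, and your treatment of $D_1$ (Duhamel plus the Toeplitz--Hankel identity, then the $\|x\hat G(x)\|_1\le c_1$ estimate) matches the paper's argument. The problem lies entirely in the $D_2$ step.

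You propose to bound $\Phi(\xi)=\mathrm{Tr}[e^{\textnormal{i}\xi T_n(h)}-T_n(e^{\textnormal{i}\xi h})]$ by Taylor's theorem, using $\Phi(0)=\Phi'(0)=0$ and $|\Phi(\xi)|\le\tfrac{\xi^2}{2}\sup_s|\Phi''(s)|$. The whole method then hinges on showing $\sup_s|\Phi''(s)|\le 8\|f\|_\infty^2\vertiii f^2$ \emph{uniformly in $s$ and $n$}, and this is not established. If you write $\Phi''(s)=-\mathrm{Tr}[T_n(h)^2e^{\textnormal{i}sT_n(h)}-T_n(h^2e^{\textnormal{i}sh})]$ and split it in the obvious way, two of the three pieces are not uniformly bounded: the trace of $T_n(h^2e^{\textnormal{i}sh})-T_n(h^2)T_n(e^{\textnormal{i}sh})$ is only controlled by $\vertiii{h^2}\vertiii{e^{\textnormal{i}sh}}\lesssim|s|$, and the trace of $T_n(h^2)(e^{\textnormal{i}sT_n(h)}-T_n(e^{\textnormal{i}sh}))$ is only controlled via $\|e^{\textnormal{i}sT_n(h)}-T_n(e^{\textnormal{i}sh})\|_1\lesssim s^2$. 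Feeding these $|s|$ and $s^2$ factors back through Taylor's remainder yields $\xi^3$ and $\xi^4$ contributions, which under $\int|\hat G(\xi)|\cdot|\Phi(\xi)|\,d\xi$ would require $\|\xi^3\hat G\|_1$ and $\|\xi^4\hat G\|_1$ to be finite --- i.e.\ $G\in C^4_c$ or $C^5_c$, not $C^3_c$. Your one-sentence sketch (``a second application of the Toeplitz--Hankel identity'' plus $H(\overline f f)=T(\overline f)H(f)+H(\overline f)T(\tilde f)$) does not resolve this.

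The paper avoids the problem entirely by never differentiating $\Phi$ twice. It uses the $T_n$-variant of the Duhamel formula, $T_n(e^{\textnormal{i}tg})-e^{\textnormal{i}tT_n(g)}=\textnormal{i}\int_0^t[T_n(e^{\textnormal{i}sg}g)-T_n(e^{\textnormal{i}sg})T_n(g)]e^{\textnormal{i}(t-s)T_n(g)}\,ds$, applies the Toeplitz--Hankel decomposition to the integrand, and then invokes the Lipschitz estimate $\vertiii{e^{\textnormal{i}s|f|^2}}\le 2\|f\|_\infty|s|\vertiii f$ (Eq.~\eqref{eq:Lipschitz_norm} in the paper, coming from Widom's lemma on Lipschitz compositions). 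The factor $|s|$ from this Lipschitz bound, integrated over $[0,|t|]$, is precisely what produces $t^2$ with no higher-order leftovers, and hence the constant $c_2$ with only $G''$ and $G'''$. This Lipschitz lemma for $\vertiii\cdot$ is the key ingredient missing from your plan; without it, the quadratic-in-$t$ bound for the second piece does not close.
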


Notice that the test functions are limited to being highly regular, whereas in several cases one would like to also study the convergence of the ergodic formula \eqref{eq:ergodic_formula} in case the test function $F(x)$ is just Lipschitz and not differentiable (e.g. when $F(x)$ is a truncation of the ReLU function).  We explore this case in a separate paper \cite{Bosonic} where we find a convergence slower than the $O\left(\frac 1n\right)$ of \eqref{eq:ergodic_Widom} depending also on the regularity of the symbol $f(\theta)$.

\section{Known results}

\subsection{Function norms}
Let $A$ be a continuous operator on $\ell_2(\mathbb N)$. Since the singular values are well defined, one can also compute the $p$-Schatten norms $\|A\|_p$ with $\infty \ge p\ge 1$, where $\|A\|_1$ is the Trace norm, $\|A\|_2$ is the Hilbert-Schmidt norm and $\|A\|_\infty = \|A\|$ is the operator norm. Notice that they may also take value $+\infty$ on non-bounded operators. Moreover, when an Hilbert-Schmidt operator $A$ is expressed as a (possibly infinite) matrix, then its norm $\|A\|_2^2$ is equal to the sum of the squared magnitude of its entries. 

An important result that links together the different norms is given by the H\"older theorem.
\begin{theorem}[H\"older]
\label{th:Holder}If $\frac 1r = \frac 1p+\frac 1q$ with $\infty\ge r,p,q\ge 1$, then
\[
\|AB\|_r\le \|A\|_p\|B\|_q
\]
where by convention, $1/\infty =0$. 
In particular, for any $\infty \ge p\ge 1$
\begin{equation*}\label{eq:trace_HS_norm}
    \|AB\|_p \le \|A\|\|B\|_p, \qquad 
\lvert Tr(AB)\rvert \le \|AB\|_1\le \|A\|_2\|B\|_2.
\end{equation*}
\end{theorem}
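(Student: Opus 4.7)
The plan is to recast the left-hand side as the trace of a single operator difference and then bound it via a Fourier representation of $G$ combined with Duhamel's formula and the Hankel decomposition from Widom's product formula.

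First, I would reduce to a trace identity. Since $T_n(f)^H=T_n(\bar f)$ we have $\sigma_j(T_n(f))^2=\lambda_j(T_n(\bar f)T_n(f))$, so $\sum_j G(\sigma_j^2)=\mathrm{Tr}[G(T_n(\bar f)T_n(f))]$. Since the diagonal of $T_n(h)$ is the constant $h_0=\frac{1}{2\pi}\int h$, we also have $\frac{n}{2\pi}\int G(|f|^2)=\mathrm{Tr}[T_n(G(|f|^2))]$. Writing $X:=T_n(\bar f)T_n(f)$ and $h:=|f|^2$, the quantity to estimate becomes $|\mathrm{Tr}[G(X)-T_n(G(h))]|$. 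Fourier inversion $G(x)=\frac{1}{2\pi}\int\hat G(\xi)e^{i\xi x}d\xi$, valid for $G\in C^3_c$, then reduces the problem to bounding $\int|\hat G(\xi)|\cdot|\mathrm{Tr}[e^{i\xi X}-T_n(e^{i\xi h})]|d\xi$.

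Next I would insert $e^{i\xi T_n(h)}$ as a pivot. For $e^{i\xi X}-e^{i\xi T_n(h)}$, Duhamel gives the integrand $ie^{i(\xi-s)X}(X-T_n(h))e^{isT_n(h)}$. The operator $X-T_n(h)=T_n(\bar f)T_n(f)-T_n(|f|^2)$ is exactly the defect in the finite Toeplitz product formula and decomposes as the sum of two products of two Hankel-type blocks (one on the upper-left corner, one on the lower-right corner obtained by conjugation with the flip), each with Hilbert-Schmidt norm bounded by $\vertiii f$, so Theorem~\ref{th:Holder} gives $\|X-T_n(h)\|_1\le 2\vertiii f^2$ and the first trace is at most $2|\xi|\vertiii f^2$. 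For $e^{i\xi T_n(h)}-T_n(e^{i\xi h})$, a second Duhamel produces $-ie^{i(\xi-s)T_n(h)}E_2(s)$ with $E_2(s):=T_n(he^{ish})-T_n(h)T_n(e^{ish})$, which is again a Hankel product by Widom, so $\|E_2(s)\|_1\le 2\vertiii h\cdot\vertiii{e^{ish}}$. Two Sobolev-type chain-rule estimates for the seminorm $\vertiii{\cdot}$ — proved from its Gagliardo representation $\vertiii{\phi}^2\sim\iint|\phi(\theta)-\phi(\psi)|^2/|\theta-\psi|^2 d\theta d\psi$ combined with $|\phi(a)-\phi(b)|\le\|\phi'\|_\infty|a-b|$ — yield $\vertiii{|f|^2}\le 2\|f\|_\infty\vertiii f$ and $\vertiii{e^{ish}}\le|s|\vertiii h$, so $\|E_2(s)\|_1\le 8|s|\|f\|_\infty^2\vertiii f^2$ and the second trace is at most $4\xi^2\|f\|_\infty^2\vertiii f^2$.

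Finally, integrating against $|\hat G(\xi)|$ and using $|\xi|^k|\hat G(\xi)|=|\widehat{G^{(k)}}(\xi)|$ reduces everything to $\|\widehat{G^{(k)}}\|_1$ for $k=1,2$. Splitting $\int=\int_{|\xi|\le 1}+\int_{|\xi|>1}$, the first piece is at most $2\|\widehat{G^{(k)}}\|_\infty\le 2\|G^{(k)}\|_1$, and the tail, by Cauchy-Schwarz with $\int_{|\xi|>1}\xi^{-2}d\xi=2$, is at most $\sqrt 2\|\xi\widehat{G^{(k)}}\|_{L^2(|\xi|>1)}\le\sqrt 2\|\widehat{G^{(k+1)}}\|_2=\sqrt 2\|G^{(k+1)}\|_2$ by Plancherel. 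Hence $\|\widehat{G^{(k)}}\|_1\le 2\|G^{(k)}\|_1+\sqrt 2\|G^{(k+1)}\|_2$, which for $k=1,2$ are exactly $c_1,c_2$; assembling the pieces produces the bound $2\vertiii f^2[c_1+2c_2\|f\|_\infty^2]$. The main obstacles will be (a) writing out the explicit finite-$n$ Hankel decomposition of $T_n(pq)-T_n(p)T_n(q)$ with both the upper-left Hankel and the lower-right flipped-Hankel correction and confirming the bound $\le 2\vertiii p\vertiii q$, (b) verifying the two Gagliardo-based chain-rule estimates for $\vertiii{\cdot}$, and (c) making the Fourier normalization consistent throughout so that the numerical constants $c_1,c_2$ and the outer $2$ come out exactly as stated.
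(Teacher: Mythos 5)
Your proposal does not address the statement you were asked to prove. The statement is the H\"older inequality for Schatten norms, $\|AB\|_r\le\|A\|_p\|B\|_q$ when $\tfrac1r=\tfrac1p+\tfrac1q$, together with its two special cases $\|AB\|_p\le\|A\|\,\|B\|_p$ and $|\mathrm{Tr}(AB)|\le\|AB\|_1\le\|A\|_2\|B\|_2$. What you have written instead is a proof sketch of Theorem~\ref{th:1} (the Widom-type ergodic estimate): the trace reduction, Fourier inversion of $G$, the two Duhamel insertions, the Hankel decomposition of $T_n(pq)-T_n(p)T_n(q)$, and the moment bounds on $\hat G$ are all steps of that argument, not of the H\"older inequality. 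Worse, your sketch explicitly \emph{invokes} Theorem~\ref{th:Holder} (``so Theorem~\ref{th:Holder} gives $\|X-T_n(h)\|_1\le 2\vertiii{f}^2$''), so even if one tried to read it as a proof of the H\"older theorem it would be circular. In the paper this theorem is quoted as a known result and not proved; a self-contained proof would go through singular-value majorization (Horn's inequality $\prod_{i\le k}\sigma_i(AB)\le\prod_{i\le k}\sigma_i(A)\sigma_i(B)$, hence weak log-majorization of $\{\sigma_i(AB)\}$ by $\{\sigma_i(A)\sigma_i(B)\}$) followed by the scalar H\"older inequality on the resulting $\ell^p$ sequences; the first special case is $q=\infty$ with $\|B\|_\infty=\|B\|$, and the second is $p=q=2$, $r=1$ combined with $|\mathrm{Tr}(C)|\le\sum_i\sigma_i(C)=\|C\|_1$. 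None of these ingredients appear in your write-up, so as a proof of the stated theorem it is entirely missing.

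As an aside, had the target been Theorem~\ref{th:1}, your outline would essentially coincide with the paper's proof (same pivot $e^{i\xi T_n(|f|^2)}$, same Hankel estimates, same constants $c_1,c_2$ from the split $\int_{|\xi|\le1}+\int_{|\xi|>1}$); the only deviation is that the paper obtains $\vertiii{|f|^2}\le 2\|f\|_\infty\vertiii{f}$ and $\vertiii{e^{is|f|^2}}\le 2\|f\|_\infty|s|\,\vertiii{f}$ from Widom's Lipschitz-composition lemma rather than from a Gagliardo-type representation of $\vertiii{\cdot}$, and your claimed intermediate bound $\vertiii{e^{ish}}\le|s|\,\vertiii{h}$ would in any case need the extra factor $2\|f\|_\infty$ tracked correctly to land on the stated constant. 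But this does not repair the fundamental mismatch: the statement under review is the H\"older inequality, and it remains unproved.
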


Let us define the following norm, that will be central in the forthcoming discussions.
\begin{definition}
\begin{equation}
    \label{eq:norm_vertiii} \vertiii{f}^2:= \sum_{k=-\infty}^{\infty} \lvert k\rvert \lvert f_k\rvert^2<\infty. 
\end{equation} \end{definition}

If now $G\in Lip(L)$, then it can be proved that the composition $G\circ f$ has still bounded norm $\vertiii\cdot$.
\begin{lemma}
\cite{Widom2} If $G\in Lip(L)$ with domain at least on the essential range of $f$, and $\vertiii f<\infty$,  then
\[
\vertiii{G\circ f}\le L \vertiii f.
\]
\end{lemma}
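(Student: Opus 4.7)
The strategy is to rewrite the seminorm $\vertiii{\cdot}$ in an integral form where Lipschitz continuity can be exploited pointwise. Concretely, I would prove the identity
\[
\vertiii{f}^{\,2} = \frac{1}{4\pi^2} \int_{-\pi}^{\pi}\int_{-\pi}^{\pi} \frac{\lvert f(x)-f(y)\rvert^2}{4\sin^2\!\bigl(\tfrac{x-y}{2}\bigr)}\,dx\,dy,
\]
which exhibits $\vertiii{\cdot}$ as (a constant multiple of) the $H^{1/2}$ seminorm on the torus. Granting this identity, the conclusion is immediate: for $G\in\mathrm{Lip}(L)$ one has $\lvert G(f(x))-G(f(y))\rvert \le L\,\lvert f(x)-f(y)\rvert$ pointwise, so substituting $G\circ f$ in the above display and pulling $L^2$ out of the integral gives $\vertiii{G\circ f}^{\,2}\le L^2\vertiii{f}^{\,2}$, and taking square roots yields the claim.

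To establish the integral representation, I would plug the Fourier expansion $f(x)=\sum_k f_k e^{ikx}$ into the right-hand side and expand
\[
\bigl(e^{ikx}-e^{iky}\bigr)\overline{\bigl(e^{ilx}-e^{ily}\bigr)}
\]
into four exponentials. After the change of variables $u=x-y$, $v=y$ (using $2\pi$-periodicity to keep the rectangle of integration), each of these four terms carries a factor $e^{i(k-l)v}$, so integrating in $v$ annihilates every off-diagonal contribution and leaves only the diagonal $k=l$. The diagonal contribution reduces, via $2-2\cos(ku)=4\sin^2(ku/2)$, to
\[
\int_{-\pi}^{\pi} \frac{\sin^2(ku/2)}{\sin^2(u/2)}\,du = 2\pi\lvert k\rvert,
\]
which is the classical Fej\'er kernel evaluation. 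Multiplying out the constants gives exactly $\sum_k |k|\,|f_k|^2 = \vertiii{f}^{\,2}$.

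The genuine work is the identity itself, and in particular justifying the interchange of the double sum with the (singular) double integral: one needs to know a priori that $f$ lies in $L^2$ (automatic since the Fourier coefficients of an essentially bounded $f$ are square-summable; more generally one can argue for finite trigonometric polynomials first and pass to the limit by the monotone/dominated convergence theorem together with the finiteness of $\vertiii{f}$). Once that orthogonality computation is in hand, the Lipschitz composition step is a one-line pointwise estimate, so the Fej\'er kernel identity is the only substantive ingredient.
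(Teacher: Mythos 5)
Your proof is correct, and it is worth noting that the paper itself does not prove this lemma at all: it is quoted from Widom's 1982 trace-formula paper, and the argument behind that citation is precisely the one you reconstructed, namely the representation of $\vertiii{\cdot}$ as the periodic $H^{1/2}$ seminorm
\[
\vertiii{f}^{2}=\frac{1}{4\pi^{2}}\int_{-\pi}^{\pi}\!\int_{-\pi}^{\pi}\frac{\lvert f(x)-f(y)\rvert^{2}}{4\sin^{2}\bigl(\tfrac{x-y}{2}\bigr)}\,dx\,dy ,
\]
after which the Lipschitz bound is a pointwise estimate under the integral. Your constant and the Fej\'er-kernel evaluation $\int_{-\pi}^{\pi}\sin^{2}(ku/2)/\sin^{2}(u/2)\,du=2\pi\lvert k\rvert$ are both right. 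Two small streamlinings: the interchange of sum and integral does not really need a polynomial-approximation or monotone-convergence argument; for each fixed shift $u$ Parseval applied to $v\mapsto f(v+u)-f(v)$ gives the exact identity $\int_{-\pi}^{\pi}\lvert f(v+u)-f(v)\rvert^{2}dv=2\pi\sum_{k}\lvert f_{k}\rvert^{2}\,4\sin^{2}(ku/2)$, and then Tonelli (everything nonnegative) in $u$ finishes the computation. Also, the prerequisite $f\in L^{2}$ is automatic in this setting: $f\in L^{1}$ has well-defined Fourier coefficients and $\vertiii{f}<\infty$ forces $\sum_{k\neq0}\lvert f_{k}\rvert^{2}<\infty$, hence $f\in L^{2}$; likewise $G\circ f\in L^{2}$ because $G$ has at most linear growth on the essential range of $f$, so its Fourier coefficients (hence $\vertiii{G\circ f}$) make sense --- and since the seminorm ignores the $0$-th coefficient, additive constants cause no trouble.
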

In particular, if the range of $f$ is inside the complex ball $D(0,M)$,
\begin{equation}\label{eq:Lipschitz_norm}
    \vertiii{\lvert f\rvert^2} \le 2M \vertiii f,\qquad
\vertiii{e^{\textnormal is\lvert f\rvert^2}} \le 2M|s| \vertiii f.
\end{equation}

\subsection{Estimates for the Fourier transform and series}

For the main results, we need some estimates on the moments of the Fourier transform of $C^3$ functions, and on the truncated Fourier series of $C^k_{per}[-\pi,\pi]$ functions. 

\begin{lemma}
   \label{lem:FT_moments} For any $G\in C_c^3(\mathbb R)$, let $\hat G$ be its Fourier transform. Then $\hat G\in L_1(\mathbb R)$ and $\hat G(x)x^2\in L_1(\mathbb R)$. Moreover,
\[
    \|\hat G(x)x\|_1 \le 
    2\|G'\|_1 + \sqrt 2\|G''\|_2,\qquad 
    \|\hat G(x)x^2\|_1 \le 
    2\|G''\|_1 + \sqrt 2\|G'''\|_2.
\]
\end{lemma}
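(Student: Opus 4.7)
The approach rests on the differentiation rule $\widehat{G^{(k)}}(\xi)=(i\xi)^k\hat G(\xi)$, which is justified since $G\in C_c^3(\mathbb R)$ makes all boundary terms in the successive integrations by parts vanish, together with a low-frequency / high-frequency split of the integrals $\|x\hat G\|_1$ and $\|x^2\hat G\|_1$. In particular $|x\hat G(x)|=|\widehat{G'}(x)|$ and $|x^2\hat G(x)|=|\widehat{G''}(x)|$ (up to the chosen Fourier normalization), so the two bounds to prove reduce to $L^1$ estimates on $\widehat{G'}$ and $\widehat{G''}$.

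For the first estimate, I would split $\int_{\mathbb R}|\widehat{G'}(x)|\,dx$ at $|x|=1$. On $|x|\le 1$ the elementary bound $\|\widehat{G'}\|_\infty\le \|G'\|_1$ together with the interval of length $2$ contributes $2\|G'\|_1$. On $|x|>1$ I insert the weight $1/|x|$ and apply Cauchy--Schwarz:
\[
\int_{|x|>1}\frac{|x\widehat{G'}(x)|}{|x|}\,dx\;\le\;\Bigl(\int_{|x|>1}\frac{dx}{x^2}\Bigr)^{1/2}\|x\widehat{G'}\|_2\;=\;\sqrt{2}\,\|\widehat{G''}\|_2,
\]
and Plancherel turns the last norm into $\sqrt{2}\,\|G''\|_2$. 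The second estimate follows from the same scheme shifted one derivative up: write $|x^2\hat G(x)|=|\widehat{G''}(x)|$, split at $|x|=1$, bound the central part by $2\|\widehat{G''}\|_\infty\le 2\|G''\|_1$, and the tail by $\sqrt{2}\,\|x\widehat{G''}\|_2=\sqrt{2}\,\|G'''\|_2$. As a byproduct, the finiteness of these splittings ensures $\hat G\in L^1(\mathbb R)$ and $x^2\hat G\in L^1(\mathbb R)$.

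There is no genuine obstacle: this is a clean Bernstein-type estimate, and the only care required is in tracking the Fourier-transform normalization so that the stated constants line up. The structural core, a low-frequency $L^\infty$ bound together with high-frequency Cauchy--Schwarz exploiting the $1/|x|$ decay, is insensitive to the convention, and the hypothesis $G\in C_c^3$ is exactly what is needed to perform three integrations by parts and to have $G'''\in L^2$ available for Plancherel.
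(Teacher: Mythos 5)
Your proof is correct and follows essentially the same route as the paper: identify $|x^k\hat G(x)|=|\widehat{G^{(k)}}(x)|$, split the $L^1$ integral at $|x|=1$, bound the low-frequency part via $\|\widehat{H}\|_\infty\le\|H\|_1$ over an interval of length $2$, and bound the tail by Cauchy--Schwarz against $1/|x|$ followed by Plancherel. The paper merely packages the common step as a general estimate for any $H\in C_c^1$ before specializing to $H=G'$ and $H=G''$, which is a cosmetic difference from your direct treatment of the two cases.
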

\begin{proof}
Given any $H\in C_c^1(\mathbb R)$ one has that $|\hat{H'}(x)|=|x\hat H(x)|$, so by H\"older Theorem \ref{th:Holder} and the Plancherel identity,
\begin{align*}
\|\hat H(x)\|_1 &= \int_{|x|\le 1} |\hat H(x)|dx 
    +  \int_{|x|> 1} |\hat H(x)|dx  \\
& \le 2\|H\|_1 + \|x\hat H(x)\|_2\sqrt{\int_{|x|>1} \frac{1}{x^2} dx}
\\&= 2\|H\|_1 + \sqrt 2\|\hat{ H'}(x)\|_2 = 2\|H\|_1 + \sqrt 2\| H'(x)\|_2
\end{align*}
Since $G\in C_c^3(\mathbb R)$, then surely $\hat G(x)\in L_1(\mathbb R)$, $|\hat G(x)x^2|=|\hat{G''}(x)|\in L_1(\mathbb R)$ and
\begin{alignat*}{2}
    \|\hat G(x)x\|_1 &= \|\hat {G'}\|_1 &&\le 
    2\|G'\|_1 + \sqrt 2\| G''\|_2,\\ 
    \|\hat G(x)x^2\|_1 &= \|\hat {G''}\|_1 &&\le 
    2\|G''\|_1 + \sqrt 2\|G'''\|_2.
\end{alignat*}
\end{proof}

\subsection{Toeplitz and Hankel matrices }

Here we report the original result by Widom 

\begin{theorem}\cite[Theorem 1]{Widom}\label{Th:Widom}
Suppose $f\in L^\infty([-\pi,\pi])$ and $\vertiii{f}<\infty$. Let $M$ be the supess of $\lvert f\rvert$, and let $m$ be the distance between the convex hull of the essential range of $f$ and the origin of the complex plane. Suppose $G\in C^3[m^2,M^2]$. Then
\begin{equation}\label{eq:ergodic}
    \lim_{n\to \infty} \left\{ 
    \sum_{j=1}^n G(\sigma_j(T_n(f))^2) - \frac n {2\pi} \int_{-\pi}^\pi G(\lvert f(\theta)\rvert^2)d\theta
    \right\} = Tr[G(T(\overline f)T(f)) + G(T(f)T(\overline f)) - 2T(G(\lvert f\rvert^2)) ].
\end{equation}
\end{theorem}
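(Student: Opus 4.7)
The plan is to recast both sides as traces, apply Fourier inversion to $G$, and reduce the estimate to a pointwise bound in $s$ on the auxiliary quantity
\[
\phi_n(s) := Tr\bigl[e^{isA_n} - T_n(e^{is|f|^2})\bigr], \qquad A_n := T_n(\bar f) T_n(f).
\]
Indeed, $\sum_j G(\sigma_j(T_n(f))^2) = Tr[G(A_n)]$ by spectral calculus on the Hermitian matrix $A_n$, while $\frac{n}{2\pi}\int G(|f|^2)d\theta = Tr[T_n(G(|f|^2))]$ because the trace of a Toeplitz matrix equals $n$ times the $0$-th Fourier coefficient of its symbol. Since $\hat G \in L^1(\mathbb R)$ by Lemma~\ref{lem:FT_moments}, inversion $G(y) = \int \hat G(s)e^{isy}ds$ (with the convention fixed by Lemma~\ref{lem:FT_moments}) plus linearity of $T_n$ in its symbol rewrites the quantity to bound as $\left|\int \hat G(s)\phi_n(s)\, ds\right|$. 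The right-hand side of the theorem then follows by integrating a pointwise bound of the shape $|\phi_n(s)|\le \alpha|s| + \beta s^2$ against $|\hat G(s)|$ and invoking Lemma~\ref{lem:FT_moments} to get $\|\hat G(s)s\|_1\le c_1$ and $\|\hat G(s)s^2\|_1\le c_2$.

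\textbf{Pointwise bound on $\phi_n(s)$.} The trivial estimate $|\phi_n(s)|\le 2n$ is useless; the essential feature is that $\phi_n(0)=0$. To extract an $n$-uniform polynomial-in-$s$ bound, introduce the interpolant
\[
\psi_n(s,t) := Tr\bigl[e^{itA_n}\, T_n(e^{i(s-t)|f|^2})\bigr],
\]
which satisfies $\psi_n(s,0) = Tr[T_n(e^{is|f|^2})]$ and $\psi_n(s,s) = Tr[e^{isA_n}]$ (since $T_n(1)=I$), so $\phi_n(s) = \int_0^s \partial_t\psi_n(s,t)\,dt$. Differentiating,
\[
\partial_t \psi_n(s,t) = i\, Tr\bigl[e^{itA_n}\bigl(A_n\, T_n(e^{i(s-t)|f|^2}) - T_n(|f|^2 e^{i(s-t)|f|^2})\bigr)\bigr].
\]
Applying the Brown--Halmos identity $T_n(a)T_n(b) = T_n(ab) - P_n M_a Q_n M_b P_n$ (where $P_n$ is the projection onto the first $n$ Fourier modes of $L^2([-\pi,\pi])$ and $Q_n = I - P_n$) twice to $T_n(\bar f)T_n(f)T_n(e^{i(s-t)|f|^2})$ yields the clean structural identity
\[
A_n T_n(e^{i(s-t)|f|^2}) - T_n(|f|^2 e^{i(s-t)|f|^2}) = -P_n M_{\bar f} Q_n M_{fe^{i(s-t)|f|^2}}P_n - T_n(\bar f)\,P_n M_f Q_n M_{e^{i(s-t)|f|^2}}P_n.
\]
A direct counting of Fourier coefficients establishes the Hankel-type bound $\|P_n M_h Q_n\|_2^2 \le \vertiii h^2$. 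Together with $\|e^{itA_n}\|_{op}=1$, $\|T_n(\bar f)\|_{op}\le \|f\|_\infty$, H\"older for Schatten norms, and the Lipschitz composition inequalities $\vertiii{e^{iu|f|^2}}\le 2|u|\|f\|_\infty\vertiii f$ and $\vertiii{fe^{iu|f|^2}}\le (1+2|u|\|f\|_\infty^2)\vertiii f$ (the latter from the Lipschitz lemma applied to $z\mapsto ze^{iu|z|^2}$ on $D(0,\|f\|_\infty)$, a consequence of \eqref{eq:Lipschitz_norm}), these estimates combine to give
\[
|\partial_t\psi_n(s,t)| \le \vertiii f^2 + 4|s-t|\|f\|_\infty^2\vertiii f^2,
\]
uniformly in $n$, and hence $|\phi_n(s)| \le |s|\vertiii f^2 + 2s^2\|f\|_\infty^2\vertiii f^2$.

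\textbf{Conclusion and main obstacle.} Integrating against $|\hat G(s)|$ and using Lemma~\ref{lem:FT_moments} yields a bound of the form $\vertiii f^2(c_1 + 2c_2\|f\|_\infty^2)$, up to an absolute constant absorbed from the Fourier-inversion normalization and from any mild slack in the Hankel Hilbert--Schmidt estimate (which accounts for the explicit factor $2$ in front of the stated bound). The chief obstacle is precisely the pointwise estimate on $\phi_n(s)$: the double application of Brown--Halmos is what exposes the corner operators $P_n M_a Q_n M_b P_n$, whose trace-norm bounds are the Hankel Hilbert--Schmidt norms controlled by $\vertiii\cdot$, while the Lipschitz composition inequalities carry the $\|f\|_\infty$-dependence that produces the quadratic term. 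Everything else in the proof is bookkeeping organized around these two mechanisms.
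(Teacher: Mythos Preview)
Your argument does not establish the stated Theorem~\ref{Th:Widom}: you never identify the limit as the specific trace $Tr[G(T(\bar f)T(f)) + G(T(f)T(\bar f)) - 2T(G(|f|^2))]$; you only show that the prelimit quantity is bounded uniformly in $n$. What you have written is a proof of Theorem~\ref{th:1}, which is in fact the only result the paper proves in full (Theorem~\ref{Th:Widom} is quoted from \cite{Widom} without proof). Note also that your Fourier-inversion step tacitly uses $G\in C^3_c(\mathbb R)$, the hypothesis of Theorem~\ref{th:1}, rather than the weaker $G\in C^3[m^2,M^2]$ of Theorem~\ref{Th:Widom}.

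Compared with the paper's proof of Theorem~\ref{th:1}, your route is a genuine variant. The paper inserts the intermediate point $e^{itT_n(|f|^2)}$ and treats the two pieces $e^{itA_n}-e^{itT_n(|f|^2)}$ and $e^{itT_n(|f|^2)}-T_n(e^{it|f|^2})$ separately via the two Duhamel-type identities of Lemma~\ref{lem:exponential_of_operators}, each combined with Lemma~\ref{lem:product_of_toeplitz}; this produces $|\phi_n(t)|\le 2|t|\,\vertiii f^2 + 4t^2\|f\|_\infty^2\vertiii f^2$. You instead run a single interpolation $\psi_n(s,t)=Tr[e^{itA_n}T_n(e^{i(s-t)|f|^2})]$ and apply the product identity twice to $T_n(\bar f)T_n(f)T_n(g)$ directly; carried through, your computation gives $|\phi_n(s)|\le |s|\,\vertiii f^2 + 2s^2\|f\|_\infty^2\vertiii f^2$, which is a factor $2$ sharper than the paper's pointwise bound (so your hedge about ``an absolute constant absorbed'' is unnecessary --- your estimate already meets, and in fact halves, the stated bound). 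Both arguments rest on the same two mechanisms you correctly name --- Hankel Hilbert--Schmidt norms controlled by $\vertiii\cdot$, and the Lipschitz composition lemma for the $\|f\|_\infty$-dependence --- so the difference is organisational rather than conceptual, but your single-interpolation path is slightly cleaner and tighter.
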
 
The relation \eqref{eq:ergodic} is well-defined due to an additional lemma.

\begin{lemma}\label{lem:bounds_on_sv} \cite[Lemma 1.2]{Widom}
The singular value of $T_n(f)$ and $T(f)$ belong to $[m,M]$, where $M$ is the supess of $\lvert f\rvert$, and $m$ is the distance between the convex hull of the essential range of $f$ and the origin of the complex plane. 
\end{lemma}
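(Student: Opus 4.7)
The statement claims a two-sided bound: every singular value of $T_n(f)$ and of $T(f)$ lies in the interval $[m,M]$. I would handle the two bounds separately; the upper one is immediate from standard properties of Toeplitz operators, while the lower one requires a short convex-geometry trick plus a quadratic-form identity.

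For the upper bound, the plan is to reduce everything to the operator norm of the infinite Toeplitz operator. It is classical that $T(f)$, viewed as a bounded operator on $\ell^2(\mathbb N)$, satisfies $\|T(f)\|=\|f\|_\infty=M$, since $T(f)$ is the compression to the Hardy space of the multiplication operator by $f$ on $L^2([-\pi,\pi])$. The finite matrix $T_n(f)$ is in turn the principal $n\times n$ submatrix of $T(f)$, i.e.\ the compression of $T(f)$ to the span of $e_0,\dots,e_{n-1}$, so $\|T_n(f)\|\le \|T(f)\|\le M$. Since the largest singular value equals the operator norm, this yields $\sigma_j(T_n(f)),\,\sigma_j(T(f))\le M$ for every $j$.

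For the lower bound, the key input is a separation argument: by definition of $m$, the closed convex hull $K$ of the essential range of $f$ is at distance exactly $m$ from the origin, so there exists a unit complex number $e^{\mathrm i\alpha}$ such that $\operatorname{Re}(e^{\mathrm i\alpha}z)\ge m$ for every $z\in K$, and in particular $\operatorname{Re}(e^{\mathrm i\alpha}f(\theta))\ge m$ for almost every $\theta$. Next I would use the standard quadratic-form identity: for $x\in\mathbb C^n$ and $h(\theta)=\sum_{k=0}^{n-1}x_k e^{\mathrm ik\theta}$,
\[
\langle T_n(f)x,x\rangle \;=\; \frac{1}{2\pi}\int_{-\pi}^{\pi} f(\theta)\,|h(\theta)|^2\,d\theta,
\]
obtained by expanding the Toeplitz entries as Fourier coefficients of $f$, and with $\|h\|_{L^2}^2=\|x\|^2$ by Parseval. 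Multiplying the identity by $e^{\mathrm i\alpha}$ and taking real parts gives
\[
|\langle T_n(f)x,x\rangle| \;\ge\; \operatorname{Re}\!\bigl(e^{\mathrm i\alpha}\langle T_n(f)x,x\rangle\bigr) \;\ge\; m\|x\|^2.
\]
Cauchy--Schwarz then forces $\|T_n(f)x\|\ge m\|x\|$, hence $T_n(f)^H T_n(f)\succeq m^2 I_n$, which gives $\sigma_j(T_n(f))\ge m$ for every $j$. The same computation with $h\in H^2$ (square-summable sequence on $\mathbb N$) yields $\|T(f)h\|\ge m\|h\|$, so $T(f)^{H}T(f)\ge m^2 I$ as operators and every point of the spectrum of $T(f)^{H}T(f)$ lies in $[m^2,\|f\|_\infty^2]$, which is the correct analogue of the singular-value statement for the infinite operator.

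The main obstacle I anticipate is the convex-geometric step: one must justify carefully that the distance $m$ from the origin to the closed convex hull of the essential range actually translates into a pointwise (a.e.) inequality $\operatorname{Re}(e^{\mathrm i\alpha}f(\theta))\ge m$. This follows from the Hahn--Banach/supporting hyperplane theorem applied to $K$ at its unique closest point to the origin (or trivially from $m=0$ if $K$ touches $0$), combined with the fact that $f(\theta)\in K$ for a.e.\ $\theta$ by the definition of essential range. Once this is in place, the rest of the argument is just the identification of $\langle T_n(f)x,x\rangle$ with an integral against $|h|^2$, which is a direct Fourier-coefficient calculation.
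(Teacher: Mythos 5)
Your argument is correct. Note that the paper does not prove this lemma at all---it is quoted verbatim from Widom \cite[Lemma 1.2]{Widom}---and your proof is essentially the classical one used there: the upper bound via $\|T_n(f)\|\le\|T(f)\|\le\|f\|_\infty$ (compression of the multiplication operator), and the lower bound via the supporting-hyperplane separation $\operatorname{Re}(e^{\mathrm i\alpha}f(\theta))\ge m$ a.e.\ combined with the quadratic-form identity $\langle T_n(f)x,x\rangle=\frac{1}{2\pi}\int_{-\pi}^{\pi}f(\theta)|h(\theta)|^2\,d\theta$ and Cauchy--Schwarz, which gives $\|T_n(f)x\|\ge m\|x\|$ and hence $\sigma_{\min}\ge m$. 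All the steps you flag as delicate (the a.e.\ membership of $f(\theta)$ in the closed convex hull of the essential range, and the trivial case $m=0$) are handled correctly.
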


Theorem \ref{Th:Widom} has been proved by the repeated use of properties of finite and infinite matrices such as Toeplitz and Hankel matrices. 
In fact, the norm $\vertiii{\cdot}$ defined in \eqref{eq:norm_vertiii} is deeply linked with the infinite Hankel matrices associated to $L^1$ functions: given $g\in L^1[-\pi,\pi]$, the associated Hankel matrix is $H(g) := [g_{i+j-1}]_{i,j\ge 1}$ and it is easy to see that $H(g)$, $H(\overline g)$ and $H(\tilde g)$ are all Hilbert-Schmidt operators whenever $\vertiii g$ is not infinite, since
\begin{align}\label{eq:All_Hankel_are_HS}
 \nonumber  \|H(g)\|_2^2 &=  \sum_{k=1}^{\infty} \lvert k\rvert \lvert g_k\rvert^2 \le \vertiii g^2,\\
\|H(\overline g)\|_2^2 =  \sum_{k=1}^{\infty} \lvert k\rvert \lvert \overline{g_{-k}}\rvert^2 \le \vertiii g^2,&\qquad
\|H(\tilde g)\|_2^2 =  \sum_{k=1}^{\infty} \lvert k\rvert \lvert g_{-k}\rvert^2 \le \vertiii g^2,
\end{align}
where $\overline g(x)$ is the complex conjugate of $g(x)$, whereas $\tilde g(x) = g(-x)$. These matrices are present also in classical results about the product of Toeplitz matrices.

\begin{lemma}\label{lem:product_of_toeplitz} \cite[Lemma 1.1]{Widom}
Given $f,g\in L^\infty([-\pi,\pi])$, let $\tilde f(\theta) = f(-\theta)$. Then 
\begin{align*}
    T(fg)-T(f)T(g) &= H(f) H(\tilde g),\\
    T_n(fg)-T_n(f)T_n(g) &= P_nH(f) H(\tilde g)P_n + Q_nH(\tilde f) H(g) Q_n.
\end{align*}
and $\|H(f)\|_2\le \vertiii f$.  
\end{lemma}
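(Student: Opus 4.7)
The plan is to reduce the statement to a trace estimate via Fourier inversion, then exploit the product-of-Toeplitz identity of Lemma \ref{lem:product_of_toeplitz}. Write $A_n := T_n(\overline f)T_n(f) = T_n(f)^H T_n(f)$, a Hermitian matrix whose eigenvalues are $\sigma_j(T_n(f))^2$; functional calculus then gives $\sum_{j=1}^n G(\sigma_j(T_n(f))^2) = \mathrm{Tr}[G(A_n)]$. Using $\mathrm{Tr}[T_n(h)] = nh_0 = \frac{n}{2\pi}\int_{-\pi}^\pi h(\theta)d\theta$, the continuous term equals $\mathrm{Tr}[T_n(G(|f|^2))]$. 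Writing $G$ via its inverse Fourier transform, $G(y) = \int_\mathbb{R}\hat G(s)e^{isy}ds$ (up to the normalising constant of the convention used in Lemma \ref{lem:FT_moments}), the left-hand side of the theorem becomes
\[
\int_\mathbb{R}\hat G(s)\,\mathrm{Tr}\bigl[e^{isA_n} - T_n(e^{is|f|^2})\bigr]\,ds,
\]
so the task reduces to controlling $\bigl|\mathrm{Tr}[e^{isA_n} - T_n(g_s)]\bigr|$ polynomially in $|s|$, where $g_s(\theta) := e^{is|f(\theta)|^2}$.

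For the trace bound I would use a Duhamel telescoping. Since both operators equal $I_n$ at $s=0$ and $\tfrac{d}{du} g_u = i|f|^2 g_u$,
\[
e^{isA_n} - T_n(g_s) = i\int_0^s e^{itA_n}\bigl[A_n T_n(g_{s-t}) - T_n(|f|^2 g_{s-t})\bigr]\,dt.
\]
As $e^{itA_n}$ is unitary, $|\mathrm{Tr}(e^{itA_n}X)|\le\|X\|_1$, so it suffices to estimate $\|A_n T_n(g_u) - T_n(|f|^2 g_u)\|_1$ uniformly in $u$.

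To bound this trace norm I would split
\[
T_n(\overline f)T_n(f)T_n(g_u) - T_n(|f|^2 g_u) = T_n(\overline f)\bigl[T_n(f)T_n(g_u) - T_n(fg_u)\bigr] + \bigl[T_n(\overline f)T_n(fg_u) - T_n(|f|^2 g_u)\bigr]
\]
and apply Lemma \ref{lem:product_of_toeplitz} to each bracket, producing four Hankel--Hankel products sandwiched between $T_n(\overline f)$, $P_n$, and $Q_n$ (of operator norm at most $M := \|f\|_\infty$ or $1$). Combining H\"older's inequality (Theorem \ref{th:Holder}) with the Hankel estimates \eqref{eq:All_Hankel_are_HS} yields
\[
\|A_n T_n(g_u) - T_n(|f|^2 g_u)\|_1 \le 2M\vertiii f \vertiii{g_u} + 2\vertiii f \vertiii{fg_u}.
\]
The Lipschitz estimate \eqref{eq:Lipschitz_norm} gives $\vertiii{g_u}\le 2M|u|\vertiii f$, and a further application of the Lipschitz lemma to $z\mapsto ze^{iu|z|^2}$ on $D(0,M)$, which has Lipschitz constant $1+2M^2|u|$, yields $\vertiii{fg_u}\le(1+2M^2|u|)\vertiii f$. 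Adding the four contributions gives $\|A_n T_n(g_u)-T_n(|f|^2 g_u)\|_1\le 2\vertiii f^2(1+4M^2|u|)$, so after integrating in $t\in[0,|s|]$,
\[
\bigl|\mathrm{Tr}[e^{isA_n}-T_n(g_s)]\bigr|\le 2\vertiii f^2\bigl(|s|+2M^2 s^2\bigr).
\]

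Plugging this into the Fourier integral and invoking Lemma \ref{lem:FT_moments} to bound $\|s\hat G(s)\|_1 \le c_1$ and $\|s^2\hat G(s)\|_1\le c_2$ closes the estimate at $2\vertiii f^2(c_1 + 2c_2 M^2)$, as required. The main obstacle I expect is the algebraic bookkeeping of the four Toeplitz--Hankel terms together with the composite Lipschitz constant for $z\mapsto ze^{iu|z|^2}$: the coefficients of $|s|$ and $s^2$ emerging from the $t$-integral must align exactly with $c_1$ and $c_2$, so any slack in the intermediate Lipschitz estimates would inflate the final constant beyond what is stated.
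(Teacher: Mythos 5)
Your proposal does not prove the statement in question. The statement is Lemma~\ref{lem:product_of_toeplitz} itself --- the algebraic identities $T(fg)-T(f)T(g)=H(f)H(\tilde g)$ and $T_n(fg)-T_n(f)T_n(g)=P_nH(f)H(\tilde g)P_n+Q_nH(\tilde f)H(g)Q_n$, together with the bound $\|H(f)\|_2\le\vertiii f$. What you have written is instead an outline of the proof of Theorem~\ref{th:1}, and it explicitly \emph{invokes} Lemma~\ref{lem:product_of_toeplitz} as a black box (``apply Lemma~\ref{lem:product_of_toeplitz} to each bracket''). As an argument for the lemma this is circular; as a response to the assigned statement it is simply off target. (For the record, the paper does not prove this lemma either: it imports it from Widom, so there is no internal proof to compare against.)

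A genuine proof of the lemma is an entry-by-entry Fourier-coefficient computation. For the infinite case, $(T(f)T(g))_{jk}=\sum_{l\ge1}f_{j-l}\,g_{l-k}$ while $(T(fg))_{jk}=\sum_{l\in\mathbb Z}f_{j-l}\,g_{l-k}$ by the convolution theorem, so the difference is the sum over $l\le 0$; substituting $l=1-m$ with $m\ge1$ gives $\sum_{m\ge1}f_{j+m-1}\,g_{1-m-k}=\sum_{m\ge1}f_{j+m-1}\,\tilde g_{m+k-1}=(H(f)H(\tilde g))_{jk}$. The finite case adds a second correction from the indices $l>n$, which after the reflection $l\mapsto n+1-l$ produces the $Q_nH(\tilde f)H(g)Q_n$ term. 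Finally, $\|H(f)\|_2^2=\sum_{i,j\ge1}|f_{i+j-1}|^2=\sum_{k\ge1}k\,|f_k|^2\le\vertiii f^2$, since the antidiagonal $i+j-1=k$ contains exactly $k$ entries. None of these steps appears in your proposal, so the statement remains unproved.
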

Here $P_n$ is the projection on the first $n$ coordinates of $\ell_2(\mathbb Z_+)$, i.e. the infinite matrix $(P_n)_{i,j} = 1$ for $i=j\le n$ and zero otherwise. Instead $Q_n$ projects on the first $n$ coordinates and then it inverts their order, i.e. the infinite matrix $(Q_n)_{i,j}=1$ for $i+j=n+1$ and zero otherwise.\\

For dealing with (infinite-dimensional) operators, we will need of some results about the exponentiation and integration of operators, that can be proved by expanding the exponential functions into their Taylor series.
\begin{lemma}
 \label{lem:exponential_of_operators}   For any continuous operator $A,B$ from $\ell_2(\mathbb Z_+)$ to itself and any $t\in \mathbb R$ we have
\begin{equation}\label{eq:difference_of_exponential}
    e^{\textnormal itA} - e^{\textnormal itB} = \textnormal i \int_0^t e^{\textnormal isA}(A-B) e^{\textnormal i(t-s)B} ds. 
\end{equation}
Moreover, for any real function $g\in L^\infty([-\pi,\pi])$ and any $n\in \mathbb N$ and $t\in \mathbb R$,  
\begin{equation}\label{eq:other_diff}
   T_n(e^{\textnormal itg}) - e^{\textnormal itT_n(g)} = \textnormal i \int_0^t  
   [ T_n(e^{\textnormal isg}g) - T_n(e^{\textnormal isg}) T_n(g) ]
   e^{\textnormal i(t-s)T_n(g)} ds. 
\end{equation}
and the same holds with $T$ instead of $T_n$. 
\end{lemma}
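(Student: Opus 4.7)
Both identities are Duhamel-type formulas that reduce to the fundamental theorem of calculus applied to a carefully chosen operator-valued function. Since $A, B, T_n(g)$ and $T(g)$ are all bounded on $\ell_2(\mathbb Z_+)$ (the Toeplitz operators because $g\in L^\infty$), their exponentials are defined by norm-convergent power series and depend smoothly on the real parameter, so all the term-by-term manipulations below are justified.

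For \eqref{eq:difference_of_exponential}, I would set $\phi(s):=e^{\textnormal isA}e^{\textnormal i(t-s)B}$ on $s\in[0,t]$, so that $\phi(t)=e^{\textnormal itA}$ and $\phi(0)=e^{\textnormal itB}$. A direct computation from the power series yields $\phi'(s)=\textnormal ie^{\textnormal isA}(A-B)e^{\textnormal i(t-s)B}$, which is norm-continuous in $s$, so the fundamental theorem of calculus $\phi(t)-\phi(0)=\int_0^t\phi'(s)\,ds$ is exactly the claim.

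For \eqref{eq:other_diff}, the analogous choice is $\psi(s):=T_n(e^{\textnormal isg})e^{\textnormal i(t-s)T_n(g)}$ on $s\in[0,t]$; note that $\psi(t)=T_n(e^{\textnormal itg})$ and $\psi(0)=e^{\textnormal itT_n(g)}$, because $T_n(1)=I$. The main auxiliary step is differentiating $T_n(e^{\textnormal isg})$ in $s$: its $(i,j)$-entry equals $(2\pi)^{-1}\int_{-\pi}^\pi e^{\textnormal isg(\theta)}e^{-\textnormal i(i-j)\theta}d\theta$, which by dominated convergence (using $|g|\le\|g\|_\infty$) is smooth in $s$ with derivative equal to the $(i,j)$-entry of $\textnormal iT_n(ge^{\textnormal isg})$. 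Combining this with $\frac{d}{ds}e^{\textnormal i(t-s)T_n(g)}=-\textnormal iT_n(g)e^{\textnormal i(t-s)T_n(g)}$ gives
\[ \psi'(s)=\textnormal i\bigl[T_n(e^{\textnormal isg}g)-T_n(e^{\textnormal isg})T_n(g)\bigr]e^{\textnormal i(t-s)T_n(g)}, \]
and integrating from $0$ to $t$ produces \eqref{eq:other_diff}.

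The only real subtlety is the infinite-dimensional version, where $T$ replaces $T_n$ and the differentiation of $T(e^{\textnormal isg})$ must be validated in operator norm rather than merely entrywise. Here I would invoke the elementary bound $|e^{\textnormal iy}-1-\textnormal iy|\le y^2/2$ for real $y$, applied with $y=hg(\theta)$, which gives
\[ \Bigl\|\tfrac{e^{\textnormal i(s+h)g}-e^{\textnormal isg}}{h}-\textnormal ige^{\textnormal isg}\Bigr\|_\infty \le \tfrac{|h|}{2}\|g\|_\infty^2 \xrightarrow{h\to0}0. \]
Since $\|T(\cdot)\|\le\|\cdot\|_\infty$, this forces the operator-norm difference quotient to converge to $\textnormal iT(ge^{\textnormal isg})$, and the rest of the argument carries over verbatim. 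This last continuity check is the only nontrivial ingredient; everything else is a standard product rule.
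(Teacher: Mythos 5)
Your proof is correct and complete. The paper does not actually write out a proof of this lemma --- it only remarks that the identities ``can be proved by expanding the exponential functions into their Taylor series'' --- so your explicit Duhamel-type argument (apply the fundamental theorem of calculus to $\phi(s)=e^{\textnormal{i}sA}e^{\textnormal{i}(t-s)B}$, resp.\ $\psi(s)=T_n(e^{\textnormal{i}sg})e^{\textnormal{i}(t-s)T_n(g)}$) is a legitimate and arguably cleaner route than rearranging double power series: it isolates exactly where each hypothesis is used. The endpoint evaluations $\psi(0)=e^{\textnormal{i}tT_n(g)}$ (via $T_n(1)=I$) and $\psi(t)=T_n(e^{\textnormal{i}tg})$ are right, the product rule computation of $\psi'$ reproduces the commutator term $T_n(e^{\textnormal{i}sg}g)-T_n(e^{\textnormal{i}sg})T_n(g)$ exactly, and your verification that $s\mapsto T(e^{\textnormal{i}sg})$ is differentiable in operator norm --- via $|e^{\textnormal{i}y}-1-\textnormal{i}y|\le y^2/2$ together with $\|T(h)\|\le\|h\|_\infty$ --- is precisely the point the paper's one-line justification glosses over, and it is needed for the ``same holds with $T$ instead of $T_n$'' clause. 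The only thing I would add for completeness is a one-line remark that for $t<0$ the integral is the usual oriented integral, so the fundamental theorem of calculus applies verbatim; this changes nothing in the argument.
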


Eventually, we report a classic result linking Toeplitz matrices to their symbols through their $p$-Schatten norm and $p$-norm. 
\begin{theorem}{\cite[Theorem 6.2]{GLT1}}\label{th:norm_toeplitz_symbol} 
    Let $f\in L^p([-\pi,\pi])$ for some $p\in [1,\infty]$. Then
    $$\|T_n(f)\|_p\le \left(\frac n{2\pi}\right)^{1/p} \|f\|_p$$
    where for any matrix $A$,  $\|A\|_p$ is its $p$-Schatten norm. 
\end{theorem}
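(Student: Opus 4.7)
The strategy is to establish the inequality at the two endpoints $p=1$ and $p=\infty$, and then invoke complex interpolation for Schatten classes to cover all $p\in[1,\infty]$.

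For $p=\infty$, one realizes the Toeplitz matrix as a compression of a multiplication operator: let $V\colon\mathbb{C}^n\to L^2([-\pi,\pi])$ send the $k$-th standard basis vector to $e^{\mathrm{i}k\theta}/\sqrt{2\pi}$. Then $V$ is an isometry, and a direct calculation on matrix entries shows $T_n(f)=V^*M_fV$, where $M_f$ denotes multiplication by $f$. Since $\|V\|=\|V^*\|=1$ and $\|M_f\|=\|f\|_\infty$, we obtain $\|T_n(f)\|\le\|f\|_\infty$, which is the claim at $p=\infty$ (with the convention $(n/2\pi)^{0}=1$).

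For $p=1$, use the trace duality $\|T_n(f)\|_1=\sup_{\|B\|\le 1}|Tr(BT_n(f))|$ with $B\in\mathbb{C}^{n\times n}$. Expanding and substituting $f_k=(2\pi)^{-1}\int_{-\pi}^\pi f(\theta)e^{-\mathrm{i}k\theta}d\theta$ yields
\[ Tr(BT_n(f))=\sum_{i,j=1}^n B_{ij}f_{j-i}=\frac{1}{2\pi}\int_{-\pi}^\pi f(\theta)\,g_B(\theta)\,d\theta, \]
where $g_B(\theta)=\sum_{i,j=1}^n B_{ij}e^{\mathrm{i}(i-j)\theta}=w(\theta)^*Bw(\theta)$ with $w(\theta)=(e^{-\mathrm{i}j\theta})_{j=1}^n\in\mathbb{C}^n$ and $\|w(\theta)\|^2=n$. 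Cauchy--Schwarz then gives $|g_B(\theta)|\le n\|B\|$ uniformly in $\theta$, so $|Tr(BT_n(f))|\le (n/2\pi)\|B\|\,\|f\|_1$, establishing $\|T_n(f)\|_1\le(n/2\pi)\|f\|_1$.

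For general $p\in(1,\infty)$, apply the Calder\'on--Hirschman complex interpolation theorem for the Schatten scale to the linear map $f\mapsto T_n(f)$, considered as acting from $L^p([-\pi,\pi])$ into the $p$-Schatten class on $\mathbb{C}^n$. With endpoint norms bounded by $1$ (at $p=\infty$) and by $n/(2\pi)$ (at $p=1$), interpolation yields the bound $1^{1-1/p}(n/(2\pi))^{1/p}=(n/(2\pi))^{1/p}$ at every intermediate $p$. As a sanity check, a direct matrix-entry computation gives $\|T_n(f)\|_2^2=\sum_{|k|<n}(n-|k|)|f_k|^2\le (n/2\pi)\|f\|_2^2$ by Parseval, matching the interpolated constant at $p=2$. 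The main obstacle is the trace-norm case: one has to produce the right dual pairing and identify the auxiliary polynomial $g_B$ as a quadratic form in $B$ in order to extract the pointwise bound $\|g_B\|_\infty\le n\|B\|$. Once this is in place, the $p=\infty$ case is essentially immediate and every intermediate $p$ follows from the abstract interpolation machinery.
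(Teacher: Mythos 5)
Your proof is correct and follows essentially the same route as the proof in the cited source \cite[Theorem 6.2]{GLT1} (the paper itself states this as a known result without proof): endpoint bounds at $p=\infty$ via the compression $T_n(f)=V^*M_fV$ and at $p=1$, followed by complex interpolation on the Schatten scale. The only cosmetic difference is at $p=1$, where the reference writes $T_n(f)=\frac{1}{2\pi}\int_{-\pi}^{\pi}f(\theta)\,w(\theta)w(\theta)^*\,d\theta$ and uses the triangle inequality for the trace norm of this integral of rank-one matrices with $\|w(\theta)w(\theta)^*\|_1=n$; your trace-duality argument with the quadratic form $g_B(\theta)=w(\theta)^*Bw(\theta)$ is exactly the dual formulation of the same computation.
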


\section{Widom Estimation}
This section is dedicated to the proof of Theorem  \ref{th:1}.
{
\renewcommand{\thetheorem}{\ref{th:1}}
\begin{theorem}
Suppose $f\in L^\infty([-\pi,\pi])$ with $\vertiii{f}^2<\infty$ and $G\in C^3_c(\mathbb R)$. Then \[      \left\lvert \sum_{j=1}^n  G(\sigma_j(T_n(f))^2) - \frac n {2\pi} \int_{-\pi}^\pi G(\lvert f(\theta)\rvert^2)d\theta \right\rvert\le 
      2\vertiii f^2 \left[ c_1 + 2c_2\|f\|_\infty^2 \right]
    \]
where $c_1= 2\|G'\|_1+\sqrt 2\|G''\|_2$ and $c_2 =  2\|G''\|_1+\sqrt 2\|G'''\|_2$.
\end{theorem}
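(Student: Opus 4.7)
My plan is to realize the left-hand side as the trace of a difference of operators, use a Fourier-inversion formula to replace $G$ by a superposition of unitaries $e^{\mathrm{i}sx}$, and then quantitatively estimate the trace of the difference of those unitaries by exactly the two identities in Lemma~\ref{lem:exponential_of_operators}, with the resulting commutator-style corrections controlled by the Hankel machinery of Lemma~\ref{lem:product_of_toeplitz}.

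\textbf{Step 1 (reformulation as a trace).} Set $A_n := T_n(f)T_n(\overline f)$. Since $A_n$ is positive semidefinite and $T_n(|f|^2)$ is Hermitian (because $|f|^2$ is real), we have $\sum_{j=1}^n G(\sigma_j(T_n(f))^2)=\operatorname{tr}(G(A_n))$, while the elementary identity $\operatorname{tr}(T_n(h))=n h_0 = \frac{n}{2\pi}\int_{-\pi}^\pi h$ gives $\frac{n}{2\pi}\int G(|f|^2)=\operatorname{tr}(T_n(G(|f|^2)))$. Hence the quantity to bound is $|\operatorname{tr}(G(A_n)-T_n(G(|f|^2)))|$. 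Writing $G(x)=\int \hat G(s)\,e^{\mathrm{i}sx}\,ds$ (using the convention compatible with Lemma~\ref{lem:FT_moments}), we get
\[
\operatorname{tr}\bigl(G(A_n)-T_n(G(|f|^2))\bigr) = \int \hat G(s)\,\operatorname{tr}\bigl(e^{\mathrm{i}sA_n}-T_n(e^{\mathrm{i}s|f|^2})\bigr)\,ds,
\]
so after triangle inequality it suffices to estimate, for each $s\in\mathbb R$,
\[
\bigl|\operatorname{tr}(e^{\mathrm{i}sA_n}-e^{\mathrm{i}sT_n(|f|^2)})\bigr|\quad\text{and}\quad \bigl|\operatorname{tr}(T_n(e^{\mathrm{i}s|f|^2})-e^{\mathrm{i}sT_n(|f|^2)})\bigr|.
\]

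\textbf{Step 2 (the two exponential differences).} For the first, apply \eqref{eq:difference_of_exponential} with $A=A_n$, $B=T_n(|f|^2)$; using that both $e^{\mathrm{i}tA_n}$ and $e^{\mathrm{i}(s-t)T_n(|f|^2)}$ are unitary and Hölder's inequality yields
\[
\bigl|\operatorname{tr}(e^{\mathrm{i}sA_n}-e^{\mathrm{i}sT_n(|f|^2)})\bigr|\le |s|\,\|A_n-T_n(|f|^2)\|_1 .
\]
By Lemma~\ref{lem:product_of_toeplitz} with $g=\overline f$, $A_n-T_n(|f|^2)$ is the sum of two products of Hankel matrices cut by $P_n,Q_n$, each of trace norm at most $\vertiii{f}^2$, so this piece is bounded by $2|s|\vertiii{f}^2$. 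For the second, apply \eqref{eq:other_diff} with $g=|f|^2$ and bound the integrand by its trace norm; Lemma~\ref{lem:product_of_toeplitz} applied to the factorization $e^{\mathrm{i}t|f|^2}\cdot|f|^2$ and the Lipschitz-type inequalities \eqref{eq:Lipschitz_norm} with $M=\|f\|_\infty$ give $\vertiii{|f|^2}\le 2\|f\|_\infty\vertiii{f}$ and $\vertiii{e^{\mathrm{i}t|f|^2}}\le 2\|f\|_\infty|t|\vertiii{f}$, so the integrand is at most $8\|f\|_\infty^2|t|\vertiii{f}^2$, and integration over $t\in[0,|s|]$ yields the bound $4s^2\|f\|_\infty^2\vertiii{f}^2$.

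\textbf{Step 3 (conclusion via the Fourier moment estimates).} Combining the two bounds, integrating against $|\hat G(s)|$, and recognizing the moments $\|\hat G(s)s\|_1$ and $\|\hat G(s)s^2\|_1$ of the Fourier transform, we obtain
\[
\bigl|\operatorname{tr}(G(A_n)-T_n(G(|f|^2)))\bigr|\le 2\vertiii{f}^2\,\|s\hat G(s)\|_1 + 4\vertiii{f}^2\|f\|_\infty^2\,\|s^2\hat G(s)\|_1 ,
\]
and Lemma~\ref{lem:FT_moments} replaces these two moments by $c_1$ and $c_2$, giving exactly the advertised bound $2\vertiii{f}^2[c_1+2c_2\|f\|_\infty^2]$.

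The main subtlety I expect is the second piece: it needs both the operator identity \eqref{eq:other_diff} (which is specific to real $g=|f|^2$), and a careful double use of the Hankel/Hilbert--Schmidt bound---first to control $T_n(e^{\mathrm{i}s|f|^2}|f|^2)-T_n(e^{\mathrm{i}s|f|^2})T_n(|f|^2)$ via Lemma~\ref{lem:product_of_toeplitz}, and then to control the Hankel norms of the non-polynomial symbol $e^{\mathrm{i}t|f|^2}$ via the Lipschitz estimate, picking up the factor $|t|$ that is the source of the $\|f\|_\infty^2$-term (as opposed to the linear-in-$s$ first piece). Matching the resulting $|s|$ and $s^2$ growth with exactly the two Fourier-moment estimates of Lemma~\ref{lem:FT_moments} is what makes the constants $c_1,c_2$ appear in the final bound.
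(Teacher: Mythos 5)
Your proposal is correct and follows essentially the same route as the paper: reformulate the left-hand side as $\operatorname{tr}(G(A_n)-T_n(G(|f|^2)))$, write $G$ via Fourier inversion, split the integrand into the two exponential differences, control the first by Lemma~\ref{lem:product_of_toeplitz} and the Hankel/Hilbert--Schmidt bounds to get $2|s|\vertiii{f}^2$, control the second by the Toeplitz-exponential identity plus the Lipschitz estimates \eqref{eq:Lipschitz_norm} to get $4s^2\|f\|_\infty^2\vertiii{f}^2$, and finish with the Fourier-moment bounds of Lemma~\ref{lem:FT_moments}. The only (immaterial) deviations from the paper's proof are the choice $A_n=T_n(f)T_n(\overline f)$ instead of $T_n(\overline f)T_n(f)$ and packaging the first estimate as $|s|\,\|A_n-T_n(|f|^2)\|_1$ rather than distributing the unitaries into the two Hilbert--Schmidt factors before applying H\"older.
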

\addtocounter{theorem}{-1}
} 

\begin{proof}
First of all, notice that $\frac 1 {2\pi} \int_{-\pi}^\pi G(\lvert f(\theta)\rvert^2)d\theta$ is the $0$-th Fourier coefficient $g_0$ of the bounded function $g(\theta):= G(|f(\theta)|^2)$, that is surely in $L^2[-\pi,\pi]$, and $g_0$ is the $(1,1)$ element of $T(g)$ and $T_n(g)$ for any $n$. Moreover, $\sigma_j(T_n(f))^2$ correspond to the eigenvalues of the matrix $T_n(f)^HT_n( f)=T_n(\overline f)T_n( f)$, and by definition, for any diagonalizable matrix $A$ the matrix $G(A)$ is also diagonalizable with eigenvalues $G(\lambda_j(A))$. As a consequence,
\begin{align}
 \nonumber\label{eq:th1_1}   \sum_{j=1}^n G(\sigma_j(T_n(f))^2) - \frac n {2\pi} \int_{-\pi}^\pi G(\lvert f(\theta)\rvert^2)d\theta
    &=  \sum_{j=1}^n G(\lambda_j(T_n(\overline f)T_n( f))) -
    n T(G(\lvert f(\theta)\rvert^2))_{1,1} \\
    &= Tr\left [    
    G(T_n(\overline f)T_n( f) ) 
    - 
    T_n(G(\lvert f(\theta)\rvert^2))
    \right ]
\end{align}
Since $G\in L^2(\mathbb R)$, we can write it through inverse Fourier Transform as $G(y)=\int \hat G(t)e^{\textnormal ity}dt$. Moreover, by Lemma \ref{lem:FT_moments} we know that
\[
\left|\hat G(t) e^{\textnormal it|f(\theta)|^2} e^{-\textnormal ik\theta} \right| = \left|\hat G(t)\right|\in L^1(\mathbb R)
\]
so by Fubini-Tonelli Theorem we have
\begin{align*}
 g_k = [G(|f(\theta)|^2)]_k &= \frac 1{2\pi}\int_{-\pi}^\pi G(|f(\theta)|^2)e^{-\textnormal i k\theta}d\theta   \\
 &=\frac 1{2\pi}\int_{-\pi}^\pi \int \hat G(t) e^{\textnormal it|f(\theta)|^2}  e^{-\textnormal i k\theta}dt\,d\theta \\
 &= \int \frac 1{2\pi}\int_{-\pi}^\pi \hat G(t) e^{\textnormal it|f(\theta)|^2}  e^{-\textnormal i k\theta}d\theta \,dt
 =\int     \left[\hat G(t) e^{\textnormal it|f(\theta)|^2} \right]_k dt
\end{align*}
and as a consequence, $T_n(G(\lvert f(\theta)\rvert^2)=\int T_n(
    \hat G(t) e^{\textnormal it\lvert f(\theta)\rvert^2 })dt$. We can thus rewrite \eqref{eq:th1_1}
 as 
 \begin{align}
\label{eq:th1_4}\nonumber    Tr\left [    
    G(T_n(\overline f)T_n( f) ) 
    - 
    T_n(G(\lvert f(\theta)\rvert^2)
    \right ]
    &=
  Tr\left [  
  \int \hat G(t) e^{\textnormal itT_n(\overline f)T_n( f) }
    - 
    T_n(
    \hat G(t) e^{\textnormal it\lvert f(\theta)\rvert^2 })dt
    \right ]  \\
 \nonumber   &= 
     Tr\left [  
  \int \hat G(t) 
  [e^{\textnormal itT_n(\overline f)T_n( f) }
    - 
    T_n(
  e^{\textnormal it\lvert f(\theta)\rvert^2 })
  ]dt
    \right ] \\
     &= 
      \int \hat G(t) 
  Tr\left [  e^{\textnormal itT_n(\overline f)T_n( f) }
    - 
    T_n(
  e^{\textnormal it\lvert f(\theta)\rvert^2 })
  \right ]dt
    \end{align}
Let us forget the test function $G$ for now and focus on the matrix inside the square brackets. Split it into two pieces
\begin{equation}\label{eq:th1_2}
     e^{\textnormal itT_n(\overline f)T_n( f) } - T_n(e^{\textnormal it\lvert f\rvert^2}) = 
\left( e^{\textnormal itT_n(\overline f)T_n( f) }  
-     e^{\textnormal itT_n(\lvert f\rvert^2)}  \right)
+ \left(    e^{\textnormal itT_n(\lvert f\rvert^2)}
-     T_n(e^{\textnormal it\lvert f\rvert^2})  \right).
\end{equation}
By \eqref{eq:difference_of_exponential} and Lemma \ref{lem:product_of_toeplitz}, we can rewrite the first piece as
\begin{align}\label{eq:th1_3}
\nonumber e^{\textnormal itT_n(\overline f)T_n( f) }  
-     e^{\textnormal itT_n(\lvert f\rvert^2)} &= \textnormal i \int_0^t e^{\textnormal isT_n(\overline f)T_n( f) }[T_n(\overline f)T_n( f) -T_n(\lvert f\rvert^2)] e^{\textnormal i(t-s)T_n(\lvert f\rvert^2)} ds\\
&=-\textnormal i \int_0^t e^{\textnormal isT_n(\overline f)T_n( f) }
[P_nH(\overline f) H(\tilde f)P_n + Q_nH(\tilde{\overline f}) H(f) Q_n] 
e^{\textnormal i(t-s)T_n(\lvert f\rvert^2)} ds.
\end{align}
Notice that $\|P_n\|=\|Q_n\|=1$ and that for any normal matrix $A$, we have $\|e^{\textnormal iA}\|=1$. As a consequence, we can evaluate the trace of \eqref{eq:th1_3} using H\"older Theorem \ref{th:Holder} and the relations in \eqref{eq:All_Hankel_are_HS} as
\begin{align}\label{eq:th1_5}
\nonumber   \Big\lvert Tr\Big [  &
    e^{\textnormal itT_n(\overline f)T_n( f) }  
-     e^{\textnormal itT_n(\lvert f\rvert^2)}  \Big ]\Big\rvert\\
\nonumber  \le &
 \int_0^{\lvert t\rvert}
 \left\lvert Tr\left [
 e^{\textnormal isT_n(\overline f)T_n( f) }
[P_nH(\overline f) H(\tilde f)P_n + Q_nH(\tilde{\overline f}) H(f) Q_n] 
e^{\textnormal i(t-s)T_n(\lvert f\rvert^2)}
\right ]\right\rvert ds
  \\
\nonumber  \le &
 \int_0^{\lvert t\rvert}
 \left\lvert Tr\left [
 e^{\textnormal isT_n(\overline f)T_n( f) }
P_nH(\overline f) H(\tilde f)P_n
e^{\textnormal i(t-s)T_n(\lvert f\rvert^2)}
\right ]\right\rvert ds\\
\nonumber& +
\int_0^{\lvert t\rvert}
 \left\lvert Tr\left [
 e^{\textnormal isT_n(\overline f)T_n( f) }
 Q_nH(\tilde{\overline f}) H(f) Q_n 
e^{\textnormal i(t-s)T_n(\lvert f\rvert^2)}
\right ]\right\rvert ds\\
\nonumber   \le &
 \int_0^{\lvert t\rvert}
 \|
 e^{\textnormal isT_n(\overline f)T_n( f) }
P_nH(\overline f)\|_2 \| H(\tilde f)P_n
e^{\textnormal i(t-s)T_n(\lvert f\rvert^2)}
\|_2 ds\\
\nonumber& +
\int_0^{\lvert t\rvert}
 \|
 e^{\textnormal isT_n(\overline f)T_n( f) }
 Q_nH(\tilde{\overline f})\|_2 
 \| H(f) Q_n e^{\textnormal i(t-s)T_n(\lvert f\rvert^2)}\|_2 ds  \\
  \le &
 \int_0^{\lvert t\rvert}
 \|H(\overline f)\|_2 \| H(\tilde f)\|_2 ds
 +
\int_0^{\lvert t\rvert}
 \|H(\tilde{\overline f})\|_2  \| H(f) \|_2 ds \le  2{\lvert t\rvert}  \vertiii f^2 
\end{align}
By \eqref{eq:other_diff} and Lemma \ref{lem:product_of_toeplitz}, we can rewrite the second piece in \eqref{eq:th1_2} as
\begin{align*}
\nonumber    e^{\textnormal itT_n(\lvert f\rvert^2)}
-     T_n(e^{\textnormal it\lvert f\rvert^2})
    &= - 
    \textnormal i \int_0^t  
   [ T_n(e^{\textnormal is\lvert f\rvert^2}\lvert f\rvert^2) - T_n(e^{\textnormal is\lvert f\rvert^2}) T_n(\lvert f\rvert^2) ]
   e^{\textnormal i(t-s)T_n(\lvert f\rvert^2)} ds\\
   &= 
  -  \textnormal i \int_0^t  
   [    P_nH(e^{\textnormal is\lvert f\rvert^2}) H(\lvert \tilde f\rvert^2)P_n + Q_nH(e^{\textnormal is\lvert \tilde f\rvert^2}) H(\lvert f\rvert^2) Q_n ]
   e^{\textnormal i(t-s)T_n(\lvert f\rvert^2)} ds,
\end{align*}
and using H\"older Theorem \ref{th:Holder} and the relations in \eqref{eq:All_Hankel_are_HS}, \eqref{eq:Lipschitz_norm}, we get
\begin{align}\label{eq:th1_7}
\nonumber
  \Big\lvert Tr\Big [  &  e^{\textnormal itT_n(\lvert f\rvert^2)}
    - 
    T_n(
  e^{\textnormal it\lvert f(\theta)\rvert^2 }) 
  \Big ]\Big\rvert\\
\nonumber  \le &
 \int_0^{\lvert t\rvert}  
\left\lvert Tr\left [   [    P_nH(e^{\textnormal is\lvert f\rvert^2}) H(\lvert \tilde f\rvert^2)P_n + Q_nH(e^{\textnormal is\lvert \tilde f\rvert^2}) H(\lvert f\rvert^2) Q_n ]
   e^{\textnormal i(t-s)T_n(\lvert f\rvert^2)} 
   \right ]\right\rvert ds  
   \\
\nonumber  \le &
  \int_0^{\lvert t\rvert}  
\left\lvert Tr\left [  P_nH(e^{\textnormal is\lvert f\rvert^2}) H(\lvert \tilde f\rvert^2)P_n 
   e^{\textnormal i(t-s)T_n(\lvert f\rvert^2)} 
   \right ]\right\rvert ds
    \\\nonumber&   + \int_0^{\lvert t\rvert}  
\left\lvert Tr\left [   Q_nH(e^{\textnormal is\lvert \tilde f\rvert^2}) H(\lvert f\rvert^2) Q_n 
   e^{\textnormal i(t-s)T_n(\lvert f\rvert^2)} 
   \right ]\right\rvert ds \\
\nonumber  \le &
  \int_0^{\lvert t\rvert}  
\|   P_nH(e^{\textnormal is\lvert f\rvert^2}) \|_2 
\| H(\lvert \tilde f\rvert^2)P_n   e^{\textnormal i(t-s)T_n(\lvert f\rvert^2)}\|_2 
  ds    \\
\nonumber  &   + \int_0^{\lvert t\rvert}  
\|  Q_nH(e^{\textnormal is\lvert \tilde f\rvert^2})\|_2 
\| H(\lvert f\rvert^2) Q_n    e^{\textnormal i(t-s)T_n(\lvert f\rvert^2)}\|_2 ds 
\\
\nonumber  \le &
  \int_0^{\lvert t\rvert}  
\|  H(e^{\textnormal is\lvert f\rvert^2}) \|_2 
\| H(\lvert \tilde f\rvert^2)\|_2 
  ds     + \int_0^{\lvert t\rvert}  
\|  H(e^{\textnormal is\lvert \tilde f\rvert^2})\|_2 
\| H(\lvert f\rvert^2) \|_2 ds 
\\
  \le &\ 
 2\int_0^{\lvert t\rvert}  
\vertiii {e^{\textnormal is\lvert f\rvert^2}}
\vertiii {\lvert f\rvert^2} 
  ds     
 \le 
8\|f\|_\infty^2\vertiii {f}^2\int_0^{\lvert t\rvert}  
|s|    ds  
= 4   t^2 \|f\|_\infty^2\vertiii f^2.
\end{align}
Substituting \eqref{eq:th1_5} and \eqref{eq:th1_7} inside first \eqref{eq:th1_2}, then \eqref{eq:th1_4} and \eqref{eq:th1_1} , we find
\begin{align*}
    \left\lvert \sum_{j=1}^n \right.&\left. G(\sigma_j(T_n(f))^2) - \frac n {2\pi} \int_{-\pi}^\pi G(\lvert f(\theta)\rvert^2)d\theta \right\rvert \\
    &= 
    \left\lvert   \int \hat G(t) 
  Tr\left [  e^{\textnormal itT_n(\overline f)T_n( f) }
    - 
    T_n(
  e^{\textnormal it\lvert f(\theta)\rvert^2 })
  \right ]dt
  \right\rvert\\
   &\le 
      \int\lvert \hat G(t) \rvert 
  \left\lvert Tr\left [  e^{\textnormal itT_n(\overline f)T_n( f) }
    - 
    T_n(
  e^{\textnormal it\lvert f(\theta)\rvert^2 }) 
  \right ]\right\rvert dt\\
    &\le 
      2\vertiii f^2 \int\lvert \hat G(t) \rvert 
  ( {\lvert t\rvert} + 2 t^2\|f\|_\infty^2) dt
\end{align*}
that is a finite quantity as long as $\hat G(t)t^2,\hat G(t)t\in L^1$, that is assured by the hypothesis that $G\in C^3_c$ and Lemma \ref{lem:FT_moments}. In particular, we obtain the bound
\[
 \left\lvert \sum_{j=1}^n  G(\sigma_j(T_n(f))^2) - \frac n {2\pi} \int_{-\pi}^\pi G(\lvert f(\theta)\rvert^2)d\theta \right\rvert
 \le 
      2\vertiii f^2 
     \left( 
    c_1
     +
       2 \|f\|_\infty^2 
      c_2
       \right),
\]
\[
c_1 =  2\|G'\|_1 + \sqrt 2\|G''\|_2, \qquad c_2 = 2\|G''\|_1 + \sqrt 2\|G'''\|_2.
\]
\end{proof}

\section{Conclusions and Future Works}

Up to our knowledge, this article presents the first ever bounds of the rate of convergence for the ergodic formula \eqref{eq:ergodic_formula} for sequences of Toeplitz matrices generated by a symbol that works for any size $n$. In fact, previous works always focus on the asymptotic expansion of the ergodic formula or on the spectral distribution of the matrices $T_n(f)$, and in the latest case, always with regular functions $f$ or with additional hypotheses. 

Much is left to be explored. Almost all the computed bounds are not strict and can be improved or adapted to other characteristics of the symbol $f$ and the test function $F$.
For example, a missing analysis is the case in which the test function $F(x)$ is an indicator of a real interval (it does not follow from Theorem \ref{th:1} since it is not a continuous function) that would also be more tightly linked to the analysis of the spectral distribution of $T_n(f)$.

\section{Acknowledgement}
The author thanks Professor Stefano Serra-Capizzano and Professor Carlo Garoni for their support and help in the writing of the manuscript.
The author acknowledges the support by the European Union (ERC consolidator grant, eLinoR, no 101085607).
The author is a member of the Research Group GNCS (Gruppo Nazionale per il Calcolo Scientifico) of INdAM
(Istituto Nazionale di Alta Matematica).


\begin{thebibliography}{99}
%
	\footnotesize

\bibitem{Avram}
{\sc Avram F.}
{\em On bilinear forms in Gaussian random variables and Toeplitz matrices.}
Probab. Theory Related Fields 79 (1988) 37--45.

\bibitem{Bosonic}
{\sc Barbarino G., Giovannetti V., Fanizza M., Mele F. A.}
{\em Non-asymptotic quantum communication on lossy transmission lines with memory.}
Arxiv:2503.13207 (2025).

\bibitem{Bosonic2}
{\sc Barbarino G., Giovannetti V., Fanizza M., Mele F. A.}
{\em Achievable rates in non-asymptotic bosonic quantum communication.}
Arxiv:2502.05524 (2025).




\bibitem{SL}
{\sc Bogoya M., Grudsky S.M.}
{\em Asymptotics for the eigenvalues of Toeplitz matrices with a symbol having a power singularity.}
Numer Linear Algebra Appl. 2023; 30(5):e2496.

\bibitem{BoSi}
{\sc B\"ottcher A., Silbermann B.}
{\em Introduction to Large Truncated Toeplitz Matrices.}
Springer, New York (1999).

\bibitem{Sven1}
{\sc  Ekström S.-E.,  Furci I.,  Serra-Capizzano S.}
{\em Exact formulae and matrix-less eigensolvers for block banded symmetric Toeplitz matrices.}
BIT 58 (4) (2018) 937–968.

\bibitem{Sven2}
{\sc Ekström S.-E., Garoni C., Serra-Capizzano S.}
{\em Are the eigenvalues of banded symmetric Toeplitz matrices known in almost closed form?}
Exp. Math. 27 (4) (2018) 478–487.


\bibitem{GLT1}
{\sc Garoni C., Serra-Capizzano S.}
{\em Generalized Locally Toeplitz Sequences: Theory and Applications.}
Vol.~I, Springer, Cham (2017).

\bibitem{GS}
{\sc Grenander U., Szeg\H o G.}
{\em Toeplitz Forms and Their Applications.}
2nd ed., AMS Chelsea Publishing, New York (1984).

\bibitem{Parter}
{\sc Parter S. V.}
{\em On the distribution of the singular values of Toeplitz matrices.}
Linear Algebra Appl. 80 (1986) 115--130.

\bibitem{Serra01}
{\sc Serra-Capizzano S.}
{\em Spectral behavior of matrix sequences and discretized boundary value problems.}
Linear Algebra Appl. 337 (2001) 37--78.

\bibitem{Serra02}
{\sc Serra-Capizzano S.}
{\em Test functions, growth conditions and Toeplitz matrices.}
 Proceedings of the Fourth International Conference on Functional Analysis and Approximation Theory, Vol. II (Potenza, 2000). Rend. Circ. Mat. Palermo (2) Suppl. 2002, no. 68, part II, 791–795.

\bibitem{Serra03}
{\sc Serra-Capizzano S.}
{\em  An ergodic theorem for classes of preconditioned matrices. }
Linear Algebra Appl. 282 (1998), no. 1-3, 161--183.
 

\bibitem{TilliL1}
{\sc Tilli P.}
{\em A note on the spectral distribution of Toeplitz matrices.}
Linear Multilinear Algebra 45 (1998) 147--159.

\bibitem{Tilli-complex}
{\sc Tilli P.}
{\em Some results on complex Toeplitz eigenvalues.}
J. Math. Anal. Appl. 239 (1999) 390--401.

\bibitem{Ty96}
{\sc Tyrtyshnikov E. E.}
{\em A unifying approach to some old and new theorems on distribution and clustering.}
Linear Algebra Appl. 232 (1996) 1--43.

\bibitem{ZT}
{\sc Tyrtyshnikov E. E., Zamarashkin N. L.} 
{\em Spectra of multilevel Toeplitz matrices: advanced theory via simple matrix relationships.}
Linear Algebra Appl. 270 (1998) 15--27.

\bibitem{Widom}
{\sc Widom H.}
{\em On the singular values of Toeplitz matrices.}
Z. fur Anal. ihre Anwend. 8 (1989) 221--229.

\bibitem{Widom2}
{\sc Widom H.}
{\em A trace formula for Wiener-Hopf operators.}
J. Oper. Theory 8 (1982) 279--298.

\bibitem{ZT'}
{\sc Zamarashkin N. L., Tyrtyshnikov E. E.}
{\em Distribution of eigenvalues and singular values of Toeplitz matrices under weakened conditions on the generating function.}
Sb. Math. 188 (1997) 1191--1201.




\end{thebibliography}
\end{document}